\theoremstyle{plain}
\newtheorem{thm}{Theorem}[section]
\newtheorem{cor}[thm]{{Corollary}}
\newtheorem{lem}[thm]{{Lemma}}
\newtheorem{prop}[thm]{Proposition}
\theoremstyle{definition}
\newtheorem{example}[thm]{Example}
\theoremstyle{remark}
\title[On analytic functions in an ordered field...]{On analytic functions in an ordered field with an infinite rank valuation}
\author{H\'ector M. Moreno}
\address{Departamento de Matem\'aticas, Universidad de La Serena, Avenida Cisternas $1200$, La Serena, Chile}
\email{hmoreno@userena.cl}
\thanks{Supported by Proyecto PR18152 Direcci\'on de Investigaci\'on y Desarrollo, Universidad de La Serena}
\begin{document}

\begin{abstract}

\noindent Let $K$ be the scalar field of the first orthomodular (or Form Hilbert) space, described by H. Keller in $1980$. It has a non-Archimedean order, an infinite rank valuation compatible with the order as well as an explicitly defined ultrametric, all of which induce the same topology. 

\vspace{1ex}

\noindent We study analytic functions defined on valued field $K$, and we will establish an invertibility local theorem for these functions as an application of Banach fixed point theorem on a particular
 complete metric space.
\end{abstract}

\maketitle

\section{Preliminaries}

\noindent Let $K$ be the scalar field of the first orthomodular space, described by H. Keller (\cite{Keller}), which is provided with an infinite rank valuation. H. Moreno introduced in \cite{Hector1}, \cite{Hector3} and \cite{Hector2} an Ultrametric Calculus on $K$, where its studied some properties of analytic functions and strictly continuous differentiable functions on $K$. In this article, we will continue the study of analytic functions defined on $K$, using the fact that the order topology is induced by an infinite rank valuation
as well an ultrametric. Moreover, following ideas of \cite{Hector2}, its posible to prove that analytic functions with non-null derivative in an open set has a local analityc inverse.

\vspace{1ex}

 \noindent  We shortly review the construction of the field $K$ described in detail in \cite{Hector1} and \cite{Keller}. Let $F_0:=\mathbb{R}$ and $\{X_1,X_2,X_3,\ldots\}$  a set of variables. For each $n\geq 1$, we define $F_n=F_{n-1}(X_n)=\mathbb{R}(X_1,\dots, X_n)$ and $F_{\infty}=\displaystyle \bigcup_{n\in\mathbb{N}}F_n$.
 
 
 \vspace{1ex}
 
\noindent Considering $\mathbb{R}$ with the usual order, we define the order on $F_{\infty}$ inductively as follows: 
$P(X_n)=a_0+ \ldots + a_s X_n^s\in F_{n-1}[X_n]$ is positive if and only if $a_s$ is positive. On the another hand, 
 a quotient of polynomials $\displaystyle{\frac{p(X_n)}{q(X_n)}}\in F_n$,  with $q(X_n) \neq 0$, is positive if and only if
$p(X_n)q(X_n)$ is positive. $(F_\infty,\leq )$ is a non-Archimedean ordered field,
 since the variable $X_1$ is an upper bound of $\mathbb{N}$ . 

\vspace{1ex}

\noindent  We define the field $K$ as the completion of $F_{\infty}$ by Cauchy sequences with respect to the order topology.  The order of $F_{\infty}$ is extended to $K$ by usual arguments, and
we have that $K$ is an ordered field. The order induces an  absolute value $|\,|$, in the classical sense,   $|x|=\max\{x,-x\}$ for all $x$.

\vspace{1ex}

\noindent \noindent Now we will introduce a non-archimedean valuation on $K$. The interplay between the order and this valuation will be crucial in the proof of the results presented in this article.
The value group of the valuation of $K$ is 
$$\Gamma:=\left\{\gamma\in (g_1^{n_1},g_2^{n_2},g_3^{n_3},...,g_i^{n_i},...)\in\prod_{i=1}^{\infty}G_i:\;\text{$n_i\in\mathbb{Z}$  such that $supp(\gamma)$ is finite}\right\},$$
where $supp(\gamma)=\{i\in\mathbb{N}:\, n_i\neq 0\}$, and  each $G_i$ is a multiplicatively subgroup generated
by a real number $g_i>1$ ordered by the usual ordering of $\mathbb{R}$. $\Gamma$ is a
linearly ordered group with the componentwise operation and the
antilexicographical ordering, that is,  if $0\neq (g_j)_{j\in\mathbb{\mathbb{N}}}\in\Gamma$ and $m=\max supp ((g_j))$, then
$$(g_j)_{j\in\mathbb{N}} >0\,\,\text{in}\,\, \Gamma\Longleftrightarrow g_m >0\,\,\text{in}\,\, G_m.$$

\vspace{1ex}

\noindent The non-archimedean valuation $v_0:\, K\rightarrow \Gamma\cup\{0\}$  is trivial on $\mathbb{R}$ and maps each $X_i$ to $\hat{g}_n :=(1,...,1,g_n,1,...)\in\Gamma$.
 $v_0$ can be extended uniquely to a non-archimedean valuation $v$ on K with the same value group. 
 
\vspace{1ex}
 
\noindent The valuation $v$ is {\it compatible} with the order defined before on $K$ in the following sense:

\centerline{\it For all  $a,b\in K$  if  $|a|\leq |b|$  then  $v(a)\leq v(b)$.}
\vspace{1ex}

\noindent This implies that the topologies induced by the order and the valuation are identical on $K$. 
 Moreover, this common topology $\tau$ on $K$ is (ultra)metrizable and  the ultrametric on $K$ is defined by the map
$d(x,y)=\phi(|x-y|)$ where
$\phi(0)=0$, and
$$\phi(x)=2^{-\min\{m\in\mathbb{N}:\; X_m^{-1}\leq x\}}\qquad (x>0).$$

\vspace{1ex}

\noindent From the definitions of $\leq$ and $|\,|$ on $K$, we can conclude that the following inclusions are hold
$$\{x\in K:\, |x-a|<\frac{1}{X_n}\}\subset \{x\in K:\; v(x-a)< \hat{g}_{n}^{-1}\}\subset \{x\in K:\, |x-a|<\frac{1}{X_{n-1}}\}$$ 
for each $a\in K$ and $n\geq 1$. For $r\in\Gamma$ and $a\in K$, we define the {\it open ball
with center $a$ and radius $r$} is the set
$$B_a(r^-)=\{x\in K:\; v(x-a)<r\}.$$\

\noindent Respectively, the {\it closed ball
with center $a$ and radius $r$} is the set
$$B_a(r^-)=\{x\in K:\; v(x-a)\leq r\}.$$

\vspace{1ex}

\section{Analytic functions.}


\noindent Given a sequence $a,a_0,a_1,...$ in $K$,  we define a power series $\displaystyle\sum_{j=0}^{\infty}a_j(z-a)^j$ in the classical sense.
 Since the valued group $\Gamma$ has an infinitude of convex subgroups (see \cite{Hector1}) and the order on $K$ is non-Archimedean,  if $x\in K$ then exists $y\in K$ such that $x^n<y$ for all $n\in\mathbb{N}$. Then, we obtain the following theorem proved in 
\cite{Hector1} and \cite{Hector3} concerning convergence of power series. 

\begin{thm}[\cite{Hector1},\cite{Hector3}]\label{convergencia series}
 Let $a_0,a_1,...$ in $K$. The power series $\displaystyle{\sum_{j=0}^{\infty}a_jz^j}$ converges, for $z \neq 0$,
  if and only if $\displaystyle\lim_{n\rightarrow\infty}a_n=0$.
  \end{thm}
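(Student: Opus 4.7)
The plan is to reduce convergence of $\sum_{j=0}^{\infty} a_j z^j$ at a nonzero $z$ to the single condition $a_n z^n \to 0$, and then to establish the equivalence $a_n z^n \to 0 \Leftrightarrow a_n \to 0$ using the boundedness-of-powers observation recalled just above the statement.

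For the first reduction, I would use that $d$ is an ultrametric and that $K$ is complete with respect to it, so $(s_n(z))_n$ converges if and only if it is Cauchy. The strong triangle inequality yields $|s_n(z)-s_m(z)| \leq \max_{m<j\leq n} |a_j z^j|$ whenever $n>m$, while conversely $|s_{m+1}(z)-s_m(z)| = |a_{m+1}z^{m+1}|$. Taken together these inequalities show that the partial sums form a Cauchy sequence if and only if $a_n z^n \to 0$ in the topology $\tau$ of $K$.

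For the equivalence with $a_n\to 0$, I would invoke the key property quoted in the paragraph preceding the statement: for every $w\in K$ there exists $y\in K$ with $w^n < y$ for all $n\in\mathbb{N}$. Applied to $|z|$ it produces $y_1>0$ such that $|z|^n < y_1$ for every $n$; since $z\neq 0$ we also have $z^{-1}\in K$, and applying the same property to $|z^{-1}|$ produces $y_2>0$ with $|z|^{-n}<y_2$ for every $n$. Given any $\epsilon>0$ in $K$, both $\epsilon/y_1$ and $\epsilon/y_2$ are positive. If $a_n\to 0$, then eventually $|a_n|<\epsilon/y_1$, whence $|a_n z^n|<\epsilon$, so $a_n z^n \to 0$. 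Conversely, if $a_n z^n\to 0$, then eventually $|a_n z^n|<\epsilon/y_2$, and $|a_n| = |a_n z^n|\cdot |z|^{-n} < \epsilon$, giving $a_n\to 0$.

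The step worth highlighting — more a pleasant feature than a genuine obstacle — is the symmetric use of the boundedness-of-powers property for both $z$ and $z^{-1}$; this is what lets the argument run uniformly, with no case split according to whether $|z|$ is smaller than, equal to, or larger than $1$. All the subtlety coming from the infinite rank of the value group $\Gamma$ has been pre-packaged into that single property, so once it is invoked the remainder of the proof reduces to the standard ultrametric manipulations.
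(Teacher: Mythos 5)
Your argument is essentially the intended one: the paper does not prove this theorem itself (it defers the proof to the author's earlier works \cite{Hector1} and \cite{Hector3}), but the sentence immediately preceding the statement isolates exactly the boundedness-of-powers property you invoke, and your two-step reduction --- Cauchy criterion in a complete ultrametric field, then the equivalence $a_nz^n\to 0\Leftrightarrow a_n\to 0$ obtained by applying that property symmetrically to $|z|$ and $|z|^{-1}$ --- is the natural way to exploit it. One step needs repair, though the fix is cosmetic rather than structural: the order absolute value $|\cdot|$ on $K$ does \emph{not} satisfy the strong triangle inequality (already $|1+1|=2>\max\{|1|,|1|\}$), so the inequality $|s_n(z)-s_m(z)|\leq\max_{m<j\leq n}|a_jz^j|$ is false as written. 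Run the Cauchy-criterion step with the Krull valuation $v$ (or with the ultrametric $d$) instead: these do satisfy $v(x+y)\leq\max\{v(x),v(y)\}$, they are multiplicative where needed, and they induce the same topology $\tau$ as $|\cdot|$, so nothing else in the argument changes. The second half of your proof, including the positivity of $y_1$, $y_2$ and of the radii $\epsilon/y_1$, $\epsilon/y_2$, is correct as it stands and matches the role the paper assigns to the infinitude of convex subgroups of $\Gamma$.
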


\noindent Let $D\subseteq K$ be a non-empty open set. We shall say that a function $f:D\rightarrow K$ is analytic in $D$ if there exist $u\in D$, $r\in\Gamma$ and  $a_0,a_1,\ldots\in K$ such that for every $z\in D$
$$f(z)=\sum_{n=0}^{\infty}a_n(z-u)^n.$$

\noindent An analytic function is infinitely many times differentiable in the order topology. Considering $f(z)$ as above we have $\displaystyle f'(z)=\sum_{n=1}^{\infty}n\, a_n(z-u)^{n-1}$ for $z\in D$. The 
following theorem  says that it does not matter the choose of $u\in D$ in the definition of analyticity. 

\begin{thm}[\cite{Hector1},\cite{Wim}]\label{infinitos centros}
Let $f$ be an analytic function in a open subset $D$. Then for every  $v\in D$ there exists $b_0,b_1,\ldots\in K$ such that $\displaystyle f(z)=\sum_{n=0}^{\infty}b_n(z-v)^n$
for all $z\in D$.
\end{thm}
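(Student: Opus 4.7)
The natural approach is to expand $(z-u)^n=\bigl((z-v)+(v-u)\bigr)^n$ by the binomial theorem and to rearrange the resulting double sum. This motivates defining
\begin{equation*}
b_k := \sum_{n=k}^{\infty}\binom{n}{k}\,a_n\,(v-u)^{n-k}\qquad (k\geq 0);
\end{equation*}
when $v=u$ the statement is trivial, so I assume $v\neq u$. Since $D$ is open and $\tau$ is not discrete, $D$ contains a point distinct from $u$, so Theorem~\ref{convergencia series} applied to $\sum a_n(z-u)^n$ forces $a_n\to 0$, and consequently $\sum a_n w^n$ converges for \emph{every} $w\in K$---a fact I would use repeatedly.

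First I would verify that each $b_k$ is a well defined element of $K$ and that $b_k\to 0$. The binomial coefficients lie in $\mathbb{N}\subset\mathbb{R}$, where $v$ is trivial, so the general term of $b_k$ has absolute value $|a_n|\,|v-u|^{n-k}$, which tends to $0$ because $a_n(v-u)^n\to 0$. The ultrametric estimate $|b_k|\le\sup_{n\ge k}|a_n|\,|v-u|^{n-k}$, split into the cases $|v-u|\le 1$ and $|v-u|>1$, then gives $|b_k|\to 0$; Theorem~\ref{convergencia series} ensures $\sum b_k(z-v)^k$ converges for every $z\in K$.

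The decisive step is the identity $\sum_{k\ge 0}b_k(z-v)^k=f(z)$ on $D$. Taking the finite binomial expansion of $S_N(z)=\sum_{n=0}^{N}a_n(z-u)^n$ and interchanging the two finite sums, the error between $S_N(z)$ and the $N$-th partial sum of $\sum b_k(z-v)^k$ is controlled by quantities of the form $|a_n|\,|v-u|^{n-k}|z-v|^k$ with $n>N$ and $k\le N$, together with the ordinary tail $\sum_{k>N}b_k(z-v)^k$. In any totally ordered group one has $|v-u|^{n-k}|z-v|^k\le\rho^n$ with $\rho:=\max(|v-u|,|z-v|)$; choosing $w\in K$ with $|w|=\rho$ gives $|a_n|\rho^n=|a_nw^n|\to 0$, and both pieces of the error therefore tend to $0$ as $N\to\infty$.

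The main obstacle is precisely this last rearrangement. In the absence of a rank-one norm, the usual ``everything stays in a fixed disk of radius $R$'' argument is unavailable and the double series must be reorganized using only the antilexicographic order on $\Gamma$. The substitute for a radius of convergence is Theorem~\ref{convergencia series}, which promotes the single condition $a_n\to 0$ to simultaneous convergence of $\sum a_n w^n$ at every $w\in K$, collapsing the double-series manipulation to a single application of the zero-coefficient criterion.
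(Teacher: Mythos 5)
The paper does not actually prove this theorem itself---it is quoted from \cite{Hector1} and \cite{Wim}---and your argument is the standard one used there: binomial re-expansion of $(z-u)^n$ about $v$, well-definedness of the coefficients $b_k$ and the limit $b_k\to 0$ via the ultrametric estimate, and control of the rearrangement error by $\max_{n>N} v(a_n)\rho^{\,n}$ with $\rho=\max\{v(v-u),v(z-v)\}$, all powered by Theorem \ref{convergencia series}, which replaces the radius-of-convergence bookkeeping of the rank-one theory. The argument is correct; the only slip is notational: the general term of $b_k$ has absolute value $\binom{n}{k}\,|a_n|\,|v-u|^{n-k}$ rather than $|a_n|\,|v-u|^{n-k}$---what is true (and what you in fact use) is that its \emph{valuation} is $v(a_n)\,v(v-u)^{n-k}$ because $v$ is trivial on the rationals, and since $v$ and $|\cdot|$ induce the same topology $\tau$ this does not affect any of the convergence claims.
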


\noindent Although the topology $\tau$ defined on $K$ is zero dimensional, the set of zeros of an analytic function on a ball $B_a(r)$ does not have an accumulation point (see \cite{Hector1} for details). Then, applying the theorems \ref{convergencia series} and \ref{infinitos centros}, it is possible to show that  a function $f(z)$ is analytic in $D$ if and only if there exists $a_0,a_1,...\in K$ such that for every $z\in D$
$$f(z)=\sum_{n=0}^{\infty}a_nz^n.$$
 
\noindent To prove that a composition of analytic functions is an analytic function is necessarily introduce some results of convergence of series in $K$. 

\vspace{1ex}

\noindent The following proposition was proved in the case of Levy-Civita field, but the proof  is valid in the case of $K$.
 
\begin{prop}\label{reordenacion de series 2}
Let  $\{a_{mn}:\; m,n\in\mathbb{N}\}$  be a subset of  $K$ such that $\displaystyle\lim_{n\rightarrow\infty}a_{mn}=0$ for every  $m\in\mathbb{N}$ and $\displaystyle\lim_{m\rightarrow\infty}a_{mn}=0$ uniformly in $n$. Then,
$$\sum_{n=0}^{\infty}\sum_{m=0}^{\infty}a_{mn}=\sum_{m=0}^{\infty}\sum_{n=0}^{\infty}a_{mn}.$$
\end{prop} 
 
\begin{lem}
Let $\displaystyle\sum_{n=0}^{\infty}a_nz^n$ and $\displaystyle\sum_{n=0}^{\infty}b_nz^n$ power series on $K$. Suppose that both converge on $K$, then the function
$$h(z)=\displaystyle\sum_{n=0}^{\infty}a_n\left(\sum_{k=0}^{\infty}b_kz^k\right)^n$$
is well defined on $K$, and $h(z)=\displaystyle\sum_{m=0}^{\infty}c_mz^m$ where
$$c_m=\sum_{n=0}^{\infty}a_n b_m^{(n)},$$
$b_0^{(0)}=1$, $b_m^{(0)}=0$ for $m>1$, $\displaystyle b_m^{(n)}=\sum_{j_1+j_2+\ldots+j_n=m}b_{j_1}b_{j_2}\ldots b_{j_n}$ $(n\in\mathbb{N})$.
\end{lem} 

\begin{proof}
By theorem \ref{convergencia series}, $h(z)$ is well defined since $\displaystyle\lim_{n\rightarrow\infty}a_n=0$.  By induction, it can be prove directly that for every $n\in\mathbb{N}$
$$\left(\sum_{k=0}^{\infty}b_kz^k\right)^n=\sum_{m=0}^{\infty}b_m^{(n)}z^m$$
where $b_m^{(n)}$ are as above. Hence, $\displaystyle h(z)=\sum_{n=0}^{\infty}a_n\left(\sum_{m=0}^{\infty}b_m^{(n)}z^m\right)$.

\noindent We fix $z_0\in K$ arbitrary, then there exists $y\in K$ such that $(z_0)^n\leq y$ for all $n\in\mathbb{N}$. Let $c_{mn}=a_n b_m^{(n)}z_0^m$. Since
$\displaystyle\sum_{n=0}^{\infty}a_nz^n$ and $\displaystyle\sum_{n=0}^{\infty}b_nz^n$ converge on $K$, by theorem \ref{convergencia series} there exist $a, b\in K$ such that
$v(a_n)\leq v(a)$ and $v(b_n)\leq v(b)$ for all $n\in\mathbb{N}$ respectively. Therefore, by the definition of $b_m^{(n)}$
$$v(c_{mn})=v(a_n b_m^{(n)}z_0^m)\leq v(a_n)v(b)^nv(y)\leq v(a_n)v(c)v(y)$$
for some $c\in K$. We obtain that $\displaystyle\lim_{n\rightarrow\infty}c_{mn}=0$ uniformly on $m$. 

\vspace{1ex}

\noindent On the another hand, 
$$v(c_{mn})=v(a_n b_m^{(n)}z_0^m)\leq v(b_m^{(n)}) v(a)v(y).$$
But the power series $\displaystyle \sum_{m=0}^{\infty}b_m^{(n)}z^m$ converges for each $n$, and it follows that 
$$\displaystyle \lim_{m\rightarrow\infty}c_{mn}=\lim_{m\rightarrow\infty}b_m^{(n)}=0$$
for each $n\in\mathbb{N}$. Therefore, by the Proposition \ref{reordenacion de series 2}
$$h(z)=\sum_{n=0}^{\infty}\sum_{m=0}^{\infty}c_{mn}=\sum_{m=0}^{\infty}\sum_{n=0}^{\infty}c_{mn}=\sum_{m=0}^{\infty}\left(\sum_{n=0}^{\infty}a_nb_m^{(n)}\right)z^m.$$
\end{proof}
 
 \noindent A direct consequence of the previous lemma is the following result.
 
 \begin{prop}\label{composicion de analiticas}
 Let $f$, $g$ analytic functions on open sets $D$ and $C$ respectively. If $f(D)\subset C$ then $g\circ f$ is analytic on $D$.
 
 \end{prop}
 
\vspace{1ex}

\noindent  For the proof of the next theorems we need to introduce the residual field associated to $K$. Let us consider now the following sets
$$R=\{ z \in K : v(z) \leq 1\}=B_0(1)$$
$$D=\{ z \in K : v(z) < 1\}=B_0(1^-)$$
Then  $R$ is a local ring with maximal ideal $D$. The residual field is $\hat{k}:=R/D$,
it is $\hat{k}$ is isomorphic  to $\mathbb{R}$, therefore is an Archimedean ordered field, so 
there exists an order-embedding $\phi:\, \hat{k}\rightarrow \mathbb{R}$. The canonical 
homomorphism from $R$ to $\hat{k}$ is the map $x\mapsto\pi(x)$.

\begin{thm}[Maximum principle]\label{modulo maximo}
 Let $f$ be an analytic function on $B_0(r)$ given by $\displaystyle\sum_{n=0}^{\infty}a_nz^n$, then there exists $\max\{v(f(z)):\;v(z)\leq r\}$ in $\Gamma$ and
$$\max\{v(f(z)):\;v(z)\leq r\}=\max\{v(f(z)):\;v(z)=r\}=\max_{n\in\mathbb{N}}\{v(a_n)r^n\}<\infty.$$
\end{thm}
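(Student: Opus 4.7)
The plan is to establish existence of $M:=\max_n v(a_n)r^n$, then show it dominates $v(f(z))$ on the closed ball $v(z)\le r$, and finally exhibit a point on the sphere $v(z)=r$ where this bound is attained.

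For the existence of $M$, since $\Gamma=v(K^\ast)$ I pick $z_0\in K$ with $v(z_0)=r$, and Theorem~\ref{convergencia series} then forces $a_n z_0^n\to 0$, i.e.\ $v(a_n)r^n\to 0$ in $\Gamma\cup\{0\}$. A sequence in $\Gamma\cup\{0\}$ tending to $0$ has only finitely many terms above any prescribed $\gamma\in\Gamma$, so $M\in\Gamma\cup\{0\}$ exists as a maximum (the case $f\equiv 0$ being trivial). For the upper bound, the strong triangle inequality applied to the partial sums and passed to the limit gives, for any $z$ with $v(z)\le r$,
\[
v(f(z))\le\sup_{n} v(a_n)v(z)^n\le\sup_{n} v(a_n)r^n=M.
\]

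The attainment on the sphere $v(z)=r$ is the heart of the matter and rests on the residue field $\hat k\cong\mathbb{R}$. Let $I=\{n:v(a_n)r^n=M\}$, a finite nonempty set by the preceding step, fix $n_0\in I$, and set $w_0=a_{n_0}z_0^{n_0}$, so $v(w_0)=M$. For $t\in R$ with $v(t)=1$, put $z=tz_0$ (so $v(z)=r$) and write
\[
\frac{f(z)}{w_0}=\sum_{n=0}^{\infty}c_n t^n,\qquad c_n=\frac{a_n z_0^n}{w_0},
\]
noting that $v(c_n)=v(a_n)r^n/M\le 1$, with equality precisely when $n\in I$. Every partial sum lies in $R$, and applying the quotient map $\pi:R\to\hat k$ (continuous for the discrete topology on $\hat k$) to the limit yields $\pi(f(z)/w_0)=P(\pi(t))$, where $P(T):=\sum_{n\in I}\pi(c_n)T^n\in\hat k[T]$ is a nonzero polynomial of degree $\max I$. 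Since $\hat k\cong\mathbb{R}$ is infinite, I can pick $s\in\hat k\setminus\{0\}$ with $P(s)\neq 0$, lift to $t\in R$ with $\pi(t)=s$, and conclude $v(f(z)/w_0)=1$, hence $v(f(z))=M$.

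The main obstacle is the last step: I must ensure that passage to the quotient commutes with the infinite sum (handled by continuity of $\pi$ on $R$, together with $v(c_n)\to 0$) and that $\hat k$ is large enough to dodge the finitely many zeros of $P$. Both points rely on the specific structure of $K$, namely Theorem~\ref{convergencia series} and the fact that $\hat k\cong\mathbb{R}$ is infinite; once they are secured, the three equalities in the statement follow by combining the bound $v(f(z))\le M$ on the ball with the explicit witness on the sphere.
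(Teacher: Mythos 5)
Your proof is correct and follows essentially the same route as the paper: both pass to the residue field $\hat{k}\cong\mathbb{R}$, observe that the reduction of the (normalized) series is a nonzero polynomial with finitely many roots, and use that $\hat{k}$ is infinite to find a unit where the value is attained. The only differences are cosmetic: you normalize by dividing by a dominant term $a_{n_0}z_0^{n_0}$ and spell out the existence of the maximum and the upper bound $v(f(z))\leq M$, whereas the paper rescales via $g(z)=f(az)$ and assumes $\max_n v(a_n)=1$.
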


\begin{proof}
  Let $f$ be an analytic function on $B_0(1)$, $\hat{k}=B_0(1)/B_0(1^-)$ the residual field and $x\mapsto\pi(x)$ the quotient map $B_0(1)\rightarrow\hat{k}$. Without loss of generality
we suppose that $\displaystyle \max_{n}v(a_n)=1$.

\vspace{1ex}

\noindent  Since $\displaystyle\lim_{n\rightarrow\infty}a_n z^n=0$ we have $\pi(a_n)=0$ for $n$ large, so the quotient map induces a nonzero polynomial
$$(\pi(f))(x)=\sum_{n=0}^{m}\pi(a_n)x^n\in \hat{k}[X]$$
for some $m\in\mathbb{N}$. $\pi(f)$ has only finite many zeros and $|\hat{k}|=\infty$, so there exists $s\in \hat{k}$ such that $s\neq \pi(0)$ and $(\pi(f))(s)\neq\pi(0)$ .
Let $b\in K$ with $\pi(b)=s$, so we have
$$v\left(\sum_{n=0}^{\infty}a_nb^n\right)=1$$
and hence
$$\max\{v(f(z)):\;v(z)\leq 1\}=\max\{v(f(z)):\;v(z) = 1\}=1\left(=\max_{n\in\mathbb{N}}\{v(a_n)(1)^n\}\right).$$
To prove the case $r\in\Gamma$ arbitrary, we can repeat the arguments above to the function $g(z)=f(a z)$ with $a\in K$, $v(a)=r$ and $z\in B_0(1)$.
\end{proof}

\begin{cor}[Liouville theorem]
A bounded analytic function $f:\, K\rightarrow K$  is constant.
\end{cor}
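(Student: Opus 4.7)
The plan is to combine the Maximum Principle with the fact that the value group $\Gamma$ has no largest element. Since $f$ is analytic on all of $K$, the discussion following Theorem \ref{infinitos centros} lets me write $f(z)=\sum_{n=0}^{\infty} a_n z^n$ with the series converging for every $z \in K$. The hypothesis that $f$ is bounded yields some $C \in K$ with $|f(z)| \leq |C|$ for all $z$; because the absolute value and the valuation induce the same topology on $K$, this translates to $v(f(z)) \leq v(C)$ for every $z$.

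I would then argue by contradiction: assume $a_n \neq 0$ for some $n \geq 1$. By Theorem \ref{modulo maximo}, for every $r \in \Gamma$ one has
$$v(C)\;\geq\;\max\{v(f(z)) : v(z) \leq r\}\;=\;\max_{k \in \mathbb{N}} v(a_k)\, r^k\;\geq\; v(a_n)\, r^n,$$
so it suffices to exhibit a single $r \in \Gamma$ for which $v(a_n)\, r^n > v(C)$.

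The decisive ingredient is that $\Gamma$ has no upper bound: the elements $\hat{g}_m=(1,\ldots,1,g_m,1,\ldots)$ exceed the whole convex subgroup $H_{m-1}$ in the antilexicographic order, so as $m$ grows, $\hat{g}_m$ surpasses any prescribed element. Choosing $r \in \Gamma$ with $r > 1$ and $r > v(C)\, v(a_n)^{-1}$, the inequality $r > 1$ gives $r^n \geq r$ for $n \geq 1$, and combined with the second inequality this yields $v(a_n)\, r^n \geq v(a_n)\, r > v(C)$. This contradicts the bound above, so $a_n = 0$ for every $n \geq 1$ and $f \equiv a_0$.

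I do not anticipate a real obstacle. The only points requiring care are the translation of the ordered-field notion of boundedness into a bound in $\Gamma$ and the verification that $r > 1$ implies $r^n \geq r$; both follow immediately from the compatibility of the valuation with the order and the explicit antilexicographic description of $\Gamma$.
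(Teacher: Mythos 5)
Your proposal is correct and follows essentially the same route as the paper: expand $f$ globally as $\sum a_n z^n$, apply the Maximum Principle to bound $v(a_n)r^n$ by the valuation of the bound, and exploit the unboundedness of $\Gamma$ (via the elements $\hat g_m$, i.e.\ $v(X_m)$) to force $a_n=0$ for $n\geq 1$. The paper phrases the last step as a limit $v(a_n)\leq \lim_m v(X_m^n)^{-1}r=0$ rather than your single-large-$r$ contradiction, but this is only a cosmetic difference; just note that the inequality $v(f(z))\leq v(C)$ is justified by the compatibility of $v$ with the order, not merely by the topologies coinciding, as you yourself observe at the end.
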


\begin{proof}
 By theorem \ref{infinitos centros}, there exist $a_0,a_1,a_2,\ldots \in K$ that $f(z)=\displaystyle\sum_{n=0}^{\infty}a_nz^n$ for all $z\in K$. Applying theorem \ref{modulo maximo}, we have that $v(a_n)s^n\leq r$ for all $s\in\Gamma$,
 which implies that $v(a_n)\leq (s^n)^{-1}r$.
In particular, considering the sequence $(s_m)$ defined by $s_m=X_m$, we observe 
$$v(a_n)\leq\lim_{m\rightarrow\infty}v(X_m^{n})^{-1}r=0$$
for all $n\geq 1$. Then, $a_n=0$ for all $n\geq 1$.
\end{proof}

\noindent Let $f:D\rightarrow K$ an non-constant analytic function and let $x_0\in K$. By theorem \ref{infinitos centros}, $f$ has a power expansion series about $x_0$ of the form
$$f(z)=\sum_{n=0}^{\infty}a_n(z-x_0)^n\quad\quad (z\in K).$$
By usual arguments we prove that $\displaystyle a_n=\frac{f^{(n)}(x_0)}{n!}$ for each $n\in\mathbb{N}$. Therefore, for all $x\in K$ there exist 
$m_x\in \mathbb{N}\setminus\{0\}$ such that $f^{(m_x)}(x)\neq 0$, otherwise $f(z)$ would be a constant function. Then, for each $x\in K$ 
$$\min\{n\in \mathbb{N}\setminus\{0\}:\, f^{(n)}(x)\neq 0\}$$
exists.

\begin{prop}\label{signo derivada}
Let $f:D\rightarrow K$ a non-constant analytic function, and let $x_0\in D$. If 
$$m=\min\{n\in \mathbb{N}\setminus\{0\}:\, f^{(n)}(x_0)\neq 0\},$$
then $x_0$ is a relative extremum if and only if $m$ is even. In this case, $x_0$ is a maximum if $f^{(m)}(x_0)<0$, and is a minimum if $f^{(m)}(x_0)>0$.
\end{prop}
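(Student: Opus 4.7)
The plan is to mirror the classical real-variable argument, exploiting that both the Taylor factorization and sign-preservation at a nonzero continuous value survive passage to this non-Archimedean ordered setting. First I would use Theorem \ref{infinitos centros} to re-expand $f$ about $x_0$ as $f(z)=\sum_{n=0}^{\infty}a_n(z-x_0)^n$ with $a_n=f^{(n)}(x_0)/n!$. By hypothesis $a_0=f(x_0)$, $a_1=\cdots=a_{m-1}=0$, and $a_m\neq 0$ has the same sign as $f^{(m)}(x_0)$ (since $m!>0$). Factoring gives
$$f(z)-f(x_0)=(z-x_0)^m\,g(z),\qquad g(z):=\sum_{k=0}^{\infty}a_{m+k}(z-x_0)^k.$$
Because $(a_{m+k})_k$ is again a null sequence, Theorem \ref{convergencia series} provides convergence of $g$ on all of $K$, so $g$ is analytic and in particular continuous at $x_0$ with $g(x_0)=a_m$.

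The decisive step is then sign preservation near $x_0$. In any ordered field the half-lines $\{y:y>0\}$ and $\{y:y<0\}$ are open in the order topology, so by continuity of $g$ at $x_0$ there exists an open neighborhood $V\subset D$ of $x_0$ on which $g(z)$ retains the sign of $a_m$. The problem thus reduces to the sign of $(z-x_0)^m$ on $V\setminus\{x_0\}$.

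Finally I would split on the parity of $m$. If $m$ is even, $(z-x_0)^m>0$ for $z\neq x_0$, so on $V\setminus\{x_0\}$ the quantity $f(z)-f(x_0)$ shares the sign of $a_m$: strict minimum when $f^{(m)}(x_0)>0$, strict maximum when $f^{(m)}(x_0)<0$. If $m$ is odd, $(z-x_0)^m$ assumes opposite signs on the two sides of $x_0$; because the order topology on $K$ admits a basis of open intervals, every neighborhood of $x_0$ contains points with $z>x_0$ and points with $z<x_0$, so $f(z)-f(x_0)$ also changes sign and no extremum occurs. The only delicate point — the candidate ``main obstacle'' — is confirming that a power series centered at $x_0$ with null coefficients is genuinely continuous at its center in this non-Archimedean ordered setting; but this is built into the $C^1$-assertion recalled just before Theorem \ref{modulo maximo}, so no new work is required.
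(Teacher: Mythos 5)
Your proof is correct and follows essentially the same strategy as the paper: both arguments reduce the claim to showing that, on a suitable punctured neighbourhood of $x_0$, the sign of $f(x_0+h)-f(x_0)$ agrees with that of $\frac{f^{(m)}(x_0)}{m!}h^m$, and then conclude by the parity of $m$. The only difference is bookkeeping: the paper bounds the tail $\sum_{n>m}\frac{f^{(n)}(x_0)}{n!}h^n$ by an explicit valuation estimate (choosing $\delta$ via elements $g_1,g_2$ of the value group), whereas you factor out $(z-x_0)^m$ and invoke continuity of the analytic cofactor $g$ together with openness of the positive cone --- a softer phrasing of the same dominant-term estimate.
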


\begin{proof}

If $f$ is an analytic function on $D$, then $f$ has a power series expansion of the form
$$f(z)=\sum_{n=0}^{\infty}\frac{f^{(n)}(x_0)}{n!}(z-x_0)^n\qquad(z\in D).$$
Let $h\in K$, we replace $z$ by $x_0+h$ in the expression above and we obtained
$$f(x_0+h)-f(x_0)=\frac{f^{(m)}(x_0)h^m}{m!}+\sum_{n=m+1}^{\infty}\frac{f^{(n)}(x_0)}{n!}h^n.$$
For the first part, we prove that there exist a neighbourhood $V$ of $x_0$ such that for each $h\in V$,
 $f(x_0+h)-f(x_0)$ and $\displaystyle\frac{f^{(m)}(x_0)h^m}{m!}$ have the same sign.

\vspace{1ex}

\noindent The power series $\displaystyle\sum_{n=0}^{\infty}\frac{f^{(n)}(x_0)}{n!}(z-x_0)^n$ converges in $K$, so $\displaystyle\lim_{n\rightarrow\infty}\frac{f^{(n)}(x_0)}{n!}=0$. 
Then, by theorem \ref{convergencia series} there exist $g\in G$ such that for all $n\geq m$
$$v\left(\frac{f^{(n)}(x_0)}{n!}\right)<g,$$
as well as $g_1,g_2\in G$ such that 
$${g}_{1}^{-1}< g\qquad\text{y}\qquad\displaystyle {g}_{2}^{-1}<g^{-1}v\left(\frac{f^{(m)}(x_0)}{m!}\right).$$
 \noindent Let $\delta\in K$ with $v(\delta)<\min\{g_1^{-1},g_2^{-1},1\}$, then for each $h\in(-\delta,0)\cup(0,\delta)$ and $n>m$ we have
 $$v(h)\leq v(\delta)< \min\{{g}_{1}^{-1},{g}_{2}^{-1},1\}\qquad\text{and}$$
 $$v\left(\frac{f^{(n)}(x_0)}{n!}h^{n-m}\right)< g\min\{{g}_{1}^{-1},{g}_{2}^{-1},1\}\leq v\left(\frac{f^{(m)}(x_0)}{m!}\right),$$
 that is,
$$\displaystyle v\left(\frac{f^{(n)}(x_0)h^{n}}{n!}\right)<v\left(\frac{f^{(m)}(x_0)h^m}{m!}\right).$$
\noindent Therefore, 
$$v\left(\sum_{n=m+1}^{\infty}\frac{f^{(n)}(x_0)}{n!}h^n\right)\leq \max\left\{v\left(\frac{f^{(n)}(x_0)}{n!}h^n\right):\,n\geq m+1\right\}<v\left(\frac{f^{(m)}(x_0)h^m}{m!}\right),$$
and which implies 
$$\left|\sum_{n=m+1}^{\infty}\frac{f^{(n)}(x_0)}{n!}h^n\right|<\frac{|f^{(m)}(x_0)||h|^m}{m!}.$$
\noindent By properties of the absolute value, from the expression 
$$f(x_0+h)-f(x_0)=\frac{f^{(m)}(x_0)h^m}{m!}+\sum_{n=m+1}^{\infty}\frac{f^{(n)}(x_0)}{n!}h^n,$$
we conclude
 $$f(x_0+h)-f(x_0)\quad \text{and} \quad\displaystyle\frac{f^{(m)}(x_0)}{m!}h^{m}$$
  have the same sign for all $h\in (-\delta,0)\cup (0,\delta)$.
 
 \vspace{1ex}
 
\noindent Now, $x_0$ is a relative extremum of $f$ if and only if the sign of $f(x_0+h)-f(x_0)$ is the same for each $h\in (-\delta,0)\cup (0,\delta)$. This is true if and only if $m$ is even, otherwise $h^m$ is negative if $h<0$ and positive if $h>0$. Then, $x_0$ is relative maximum (resp. minimum) if and only if $m$ is even and  $f^{(m)}(x_0)<0$ (resp. $f^{(m)}(x_0)>0$). 
 
\end{proof}

\noindent A direct consecuence of the previous proposition is the following corollary.

\begin{cor}
If $x_0$ is a relative extremum of $f$, then $f'(x_0)=0$.
\end{cor}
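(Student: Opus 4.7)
The plan is to deduce the corollary directly from Proposition \ref{signo derivada}, with only a brief separate treatment of the constant case.

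First I would dispose of the trivial situation in which $f$ is constant on $D$: then $f'(x_0)=0$ automatically, so there is nothing to prove. This reduction is necessary because Proposition \ref{signo derivada} is phrased for non-constant analytic functions, and the quantity $m=\min\{n\in\mathbb{N}\setminus\{0\}:\,f^{(n)}(x_0)\neq 0\}$ is only guaranteed to exist in that setting (as explained in the paragraph preceding the proposition).

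So suppose $f$ is non-constant, and let $x_0\in D$ be a relative extremum. Set
$$m=\min\{n\in \mathbb{N}\setminus\{0\}:\, f^{(n)}(x_0)\neq 0\},$$
which exists by the remarks preceding Proposition \ref{signo derivada}. By that proposition, since $x_0$ is a relative extremum, $m$ must be even, hence $m\geq 2$. By the very definition of $m$ as the minimum such index, every smaller positive integer $n<m$ satisfies $f^{(n)}(x_0)=0$. In particular, taking $n=1<2\leq m$ yields $f'(x_0)=0$.

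There is no real obstacle here; the whole argument is a one-line inference from the proposition, once one notes that ``even'' forces $m\geq 2$ and therefore $1$ lies strictly below the first nonvanishing derivative index. The only thing worth being careful about is the constant case, which the proposition does not cover.
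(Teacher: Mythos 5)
Your argument is correct and is exactly the ``direct consequence'' the paper has in mind (the paper gives no explicit proof): evenness of $m$ forces $m\geq 2$, so minimality of $m$ gives $f'(x_0)=0$, with the constant case handled trivially. Nothing further is needed.
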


\begin{prop}
Let $f:[a,b]\rightarrow K$ an analytic function. If  $f'(z)>0$ for all $z\in [a,b]$ then $f(a)<f(b)$.
\end{prop}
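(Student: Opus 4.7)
The plan is to argue by contradiction: assume $f(a) \geq f(b)$, and exhibit an interior relative extremum of $f$ on $[a,b]$. Since $f'(z) > 0$ throughout $[a,b]$, the index $m$ appearing in Proposition \ref{signo derivada} equals $1$ (odd) at every point, so by the corollary preceding this proposition such an extremum would force $f'$ to vanish there, yielding the desired contradiction.

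I would first invoke Proposition \ref{signo derivada} at $a$: its proof furnishes $\delta_1 > 0$ with $f(a+h) > f(a)$ for all $h \in (0, \delta_1)$, so any $a' \in (a, a+\delta_1)$ satisfies $f(a') > f(a) \geq f(b)$. Symmetrically at $b$ one obtains $\delta_2 > 0$ with $f(b-h) < f(b)$ for $h \in (0, \delta_2)$; I would then choose $b' \in (b-\delta_2, b)$ with $b' > a'$, giving $f(b') < f(b) \leq f(a) < f(a')$. Thus on the subinterval $[a',b'] \subset (a,b)$ the values satisfy $f(a') > f(b')$ even though $f'(z)>0$ throughout. Since $a'$ and $b'$ are interior points of the original $[a,b]$, the same proposition applies at them and rules them out as extrema of $f|_{[a',b']}$: nearby points to the right of $a'$ give values strictly larger than $f(a')$, and nearby points to the left of $b'$ give values strictly smaller than $f(b')$. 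Consequently any maximizer of $f$ on $[a',b']$ must lie in the open interval $(a',b')$ and is therefore a genuine interior relative maximum at which $f'$ is forced to vanish, contradicting $f' > 0$.

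The main obstacle is to justify that $f$ actually attains a maximum on $[a',b']$: $K$ is not Dedekind-complete, and the order-interval $[a',b']$ is not compact in $\tau$ since $(K,\tau)$ is not even locally compact, so the classical extreme value theorem is unavailable. I would address this by exploiting analyticity in the spirit of the proof of Theorem \ref{modulo maximo}: after a translation and dilation of the variable reducing to a centered ball, the power-series representation of $f$ descends modulo the maximal ideal $D$ to a nonzero polynomial over the residual field $\hat{k} \cong \mathbb{R}$; the real extreme value theorem supplies an extremum of that polynomial, which then lifts back to a genuine order-maximizer of $f$ on $[a',b']$ in $K$ (using that the order on $K$ refines the partial order induced by the valuation, as in the residual-field argument for Theorem \ref{modulo maximo}). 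This residual-field reduction is the technical heart of the argument, and it is the step that genuinely requires $f$ to be analytic rather than merely continuously differentiable.
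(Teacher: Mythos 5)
Your argument is correct up to and including the construction of $a'$ and $b'$, but the step on which everything rests --- that $f$ attains an order-maximum on $[a',b']$ --- is a genuine gap, and it fails for a reason the paper itself makes explicit. The extreme value theorem is false in $K$: the paper's own example $f(z)=\tfrac13 z^3-X_1z$ on $[1,X_1^2]$ is analytic, first decreasing and then increasing, and attains no minimum, precisely because $f'\neq 0$ throughout. Your proposed repair via the residual field does not recover attainment. A maximizer of the residual polynomial $p=(\phi\circ\pi)(f)$ only determines the putative maximizer of $f$ modulo the maximal ideal $D=B_0(1^-)$, i.e.\ up to an entire coset of $D$ on which $f$ still varies at lower levels of the value group; since $\Gamma$ has infinitely many convex subgroups the same obstruction recurs at every level, and there is no bottom level at which the supremum is forced to be attained. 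Worse, under your contradiction hypothesis the residual polynomial satisfies $p'\ge 0$ (because $f'>0$ reduces to a nonnegative element of $\hat k$), so $p$ is nondecreasing on the corresponding real interval and its maximum sits at the right endpoint: there is no interior extremum of $p$ to lift in the first place. Comparing $p(\alpha')\le p(\beta')$ with $f(a')>f(b')$ only yields $\pi(f(a'))=\pi(f(b'))$, which is not yet a contradiction.

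The paper avoids all of this by arguing directly rather than by contradiction. After the affine change of variable reducing to $[0,1]$ with $f(0)=0$, it normalizes so that $\max_j v(a_j)=1$; the residual polynomial $p$ is then non-null with $p(0)=0$ and $p'\ge 0$ on $[0,1]$, hence non-constant and increasing, so $p(1)>p(0)$ \emph{strictly} in $\mathbb{R}$. That strict inequality at the top residual level transfers back to $f(1)>f(0)$ in $K$ because the order of $K$ refines that of $\hat k$. The normalization is exactly what guarantees strictness at the residual level, and it is the ingredient your argument lacks; your opening construction of $a'$ and $b'$ from Proposition \ref{signo derivada}, while correct, then becomes unnecessary.
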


\begin{proof}
\noindent Firstly, we suppose that $f$ analytic on $[0,1]$ and $f(0)=0$. Let $\displaystyle f(z)=\sum_{j=0}^{\infty}a_jz^j$ for  $z\in [0,1]$, without loss of generality 
we can assume that  $\displaystyle\max_jv(a_j)=1$. As in the proof of theorem \ref{modulo maximo}, the quotient map $x\mapsto \pi(x)$ induces a nonzero polynomial
$$(\pi(f))(x)=\sum_{n=0}^{m}\pi(a_n)x^n\in \hat{k}[X].$$

\noindent We remind that the residual field is $\hat{k}:=R/D$ is isomorphic  to $\mathbb{R}$ by an order-embedding $\phi:\,\hat{k}\rightarrow \mathbb{R}$. Let $p(x)\in\mathbb{R}[x]$ the polynomial resulting from $f$ under $\phi\,\circ\,\pi$ given by
$\displaystyle p(x)=\sum_{n=0}^{m}(\phi\,\circ\, \pi)({a_n})x^n$. Then, the derivative $p'(x)$ is the polynomial resulting from $f'$ under $\phi\circ \pi$.

\vspace{1ex}

\noindent  For every $\alpha\in (0,1)$ in $K$ we have that $f'(\alpha)>0$, then 
$f'(\alpha)=\pi(f'(\alpha))+\delta_{f'(\alpha)}$ for some $\delta_{f'(\alpha)}\in K$ with $v(\delta_{f'(\alpha)})<1$.  Hence, we can conclude that $p'(x)\geq 0$ in $\mathbb{R}$ for all $x\in\mathbb{R}$.

\vspace{1ex}

\noindent Since $\displaystyle\max_jv(a_j)=1$, $p(x)$ is a non-null polynomial and its derivative is positive on $[0,1]$ in $\mathbb{R}$, and $p(0)$=0. Therefore $p(0)<p(1)$, which implies that
$$f(1)=\pi(f(1))+\delta_{f(1)}$$
with $\pi(f(1))\neq \pi(0)$ and $v(\delta_{f(1)})<1$. By the definition of the order of $K$, $\hat{k}$ and $\phi$, we have that $f(0)<f(1)$. 

\vspace{1ex}

\noindent Suppose now that $f$ is analytic on $[a,b]$ and $f'\geq 0$ on $[a,b]$, we can repeat the argument above before to the function 
$F (x) = f ((b-a)x+a)-f(0)$ which is an analytic function on $[0,1]$, $F(0)=0$ and its derivative is positive on $[0,1]$.
\end{proof}

\begin{cor}
Let $f:[a,b]\rightarrow K$ an analytic function. 
\begin{enumerate}
\item If $f'(z)>0$ for all $z\in (a,b)$ then $f$ is monotone increasing in $[a,b]$. 
\item If $f'(z)<0$ for all $z\in (a,b)$ then $f$ is monotone decreasing in $[a,b]$. 
\end{enumerate}
\end{cor}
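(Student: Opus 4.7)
The plan is to derive both statements from the preceding proposition by reducing monotonicity on $[a,b]$ to the pairwise comparison $f(x)<f(y)$ on arbitrary sub-intervals, and then to obtain (2) from (1) by passing to $-f$.

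For part (1), I would fix arbitrary $x,y\in[a,b]$ with $x<y$ and consider the restriction of $f$ to $[x,y]$, which is still analytic. For any $z\in(x,y)$ one has $a\le x<z<y\le b$, so $z\in(a,b)$ and therefore $f'(z)>0$. This is the hypothesis needed to invoke the previous proposition on $[x,y]$ (inspecting its proof, one sees that only positivity of $f'$ on the open interior is actually used, via the step ``for every $\alpha\in(0,1)$ in $K$ we have $f'(\alpha)>0$''). The proposition then gives $f(x)<f(y)$, and since $x,y$ were arbitrary, $f$ is strictly monotone increasing on $[a,b]$.

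For part (2), I would apply (1) to the auxiliary function $g=-f$. Since $f$ is analytic on $[a,b]$, so is $g$, and $g'(z)=-f'(z)>0$ for every $z\in(a,b)$. Part (1) yields that $g$ is monotone increasing on $[a,b]$, which is equivalent to $f$ being monotone decreasing on $[a,b]$.

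The main obstacle is a minor mismatch of hypotheses at the endpoints: the preceding proposition is stated with $f'>0$ on the \emph{closed} interval $[a,b]$, whereas here we only assume strict positivity on the \emph{open} interval $(a,b)$. This is handled by re-reading the proposition's proof, which passes through the residue field to $p'\ge 0$ on $\mathbb{R}$ and only invokes $f'(\alpha)>0$ for $\alpha$ in the open interior. If one instead insists on applying the proposition verbatim, then in the boundary cases $x=a$ or $y=b$ one inserts an intermediate point $c\in(a,b)$, applies the proposition on $[x+\varepsilon,c]$ and $[c,y-\varepsilon]$ whose closures lie in $(a,b)$, and then lets $\varepsilon\to 0$ using continuity of $f$ to recover the strict inequality $f(x)<f(y)$.
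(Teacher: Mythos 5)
Your proposal is correct and is essentially the argument the paper intends: the corollary is stated without proof as an immediate consequence of the preceding proposition, obtained exactly as you do by restricting $f$ to an arbitrary subinterval $[x,y]\subseteq[a,b]$ (observing that the proposition's proof only uses $f'(\alpha)>0$ for $\alpha$ in the open interior) and deducing part (2) from part (1) via $g=-f$. One small caveat on your fallback argument: letting $\varepsilon\to 0$ with a single intermediate point $c$ only yields $f(x)\le f(c)\le f(y)$, so to keep the inequality strict you should insert two intermediate points $c_1<c_2$ in $(a,b)$ and use the proposition verbatim on $[c_1,c_2]$ to get $f(x)\le f(c_1)<f(c_2)\le f(y)$.
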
	
			
\noindent The following example shows the existence of an analytic function $f$ on $[a,b]$ with $f'(z)\neq 0$ on $[a,b]$, but it does not have relatives extremes on $[a,b]$.

\begin{example}
We consider $f(z):\,[1,X_1^2]\rightarrow K$ an analytic function defined by $\displaystyle f(z)={1\over 3} z^3-X_1z$. The derivative of $f(z)$ is $f'(z)=z^2-X_1$, and we observe that
 $f'(z)\neq 0$ for all $z\in [1,X_1^2]$. 

\vspace{2ex}

\noindent Let $z_1,z_2\in K$. Consider the following cases:

\begin{enumerate}
 \item Suppose that $X_1\leq z_1\leq z_2$, then
 $$f(z_1)=z_1\left({{z_1^2}\over 3}-X_1\right)\leq z_1\left({{z_2^2}\over 3}-X_1\right)\leq z_2\left({{z_2^2}\over 3}-X_1\right) =f(z_2).$$

\item If $s<z_1\leq z_2< X_1$ for all $s\in\mathbb{R}$, it can be prove directly that 
$$\displaystyle \min\left\{\left({{z_1^2}\over 3}-X_1\right), \left({{z_2^2}\over 3}-X_1\right)\right\}\geq 0.$$
As the previous part, it is proven that $f(z_1)\leq f(z_2)$.

\item If $0\leq z_1\leq z_2 \leq s$ for all $s\in\mathbb{R}$, we have that $\displaystyle {z_1^2\over 3}\leq {z_2^2\over 3}<X_1$.
Then
$$f(z_2)=z_2\left({{z_2^2}\over 3}-X_1\right)=-z_2\left(X_1-{{z_2^2}\over 3}\right)\leq -z_1\left(X_1-{{z_2^2}\over 3}\right)\leq -z_1\left(X_1-{{z_1^2}\over 3}\right)=f(z_1).$$

\item If $0\leq z_1 \leq t$ y $s<z_2$ for some $t\in\mathbb{R}$ and for all $s\in\mathbb{R}$, then $f(z_1)\leq 0\leq f(z_2)$.

\end{enumerate}

\vspace{1ex}

\noindent Therefore, $f$ is negative and decreasing on $[1,X_1^2]\cap\left\{z:\, 0\leq z\leq s \;\;\text{for some $s\in\mathbb{R}$}\right\},$
and $f$ is positive and increasing on $[1,X_1^2]\cap\left\{z:\, s\leq z \;\;\text{for all $s\in\mathbb{R}$}\right\}$. However, $f(z)$ does not have a  minimum on this interval because $f'(z)\neq 0$ for all 
$z\in [1,X_1^2]$.
\end{example}

\noindent Although that $K$ is an ordered field that extending the order of $\mathbb{R}$, it is not possible to show a Rolle's theorem since $K$ is not henselian. We refer to the reader \cite{rolle} for a 
characterization of whose fields which satisfies a Rolle's Theorem for polynomials.

\vspace{1ex}

\section{Local invertibility of analytic functions}
\noindent Let $X\subset K$ be a set with no isolated points. For a function
  $f:\, X\rightarrow K$ we consider
 $$\|f\|_{\infty}=\sup_{\Gamma^{\#}}\{v(f(x)):\; x\in X\}$$
where $\Gamma^{\#}$ denote the Dedekind completion of $\Gamma$. 

\vspace{1ex}
 
 \noindent As in the case of fields with a rank one valuation (\cite{Wim}), an analytic function  $f:X\rightarrow K$ is continuously differentiable on $X$, that is, the function
$$\Phi_1 f(x,y)= {{f(x)-f(y)}\over{x-y}}\quad\quad\quad\, (x,y\in X,\; x\neq y),$$
can be entended to a continuous function on $X\times X$.

\vspace{1ex}

\noindent The set
 $C^1(X\rightarrow K)$ consisting of  the continuously differentiable functions on $X$ is complete with the topology induced by the non-archimedean norm 
 $$\|f\|_1=\max\{\|f\|_{\infty},\|\Phi_1f\|_{\infty}\}.$$

\noindent Using the ultrametrizability of the uniformity induced by $\|\,\|_1$ on $C^1(X\rightarrow K)$, its possible to prove an implicit function theorem for functions $f:\, K^2\rightarrow K$ 
(for more details, see \cite{Hector2}). The proof is an application of Banach fixed point theorem on this space.

\vspace{1ex}

\noindent Since analytic functions are $C^1$- functions, we will extend this argument to prove that the local inverse $g$ of an analytic function $f$ with derivate non-null on an open set $U$ 
 is also an analytic function. For this purpose, we consider $\mathcal{A}(X\rightarrow K)$ the set of  analytics functions $f$ on $X$ with the uniform norm $\|\cdot\|_{\infty}$.
  
 \vspace{1ex}
 
 \noindent We have that $(\mathcal{A}(X\rightarrow K), \|\cdot\|_{\infty} )$ is a normed space over $K$ in the sense of \cite{morado} and the norm induced a topology. Moreover, as in the case of $(C^1(X\rightarrow K),\|\cdot\|_1)$, the norm $\|\cdot\|_{\infty}$ induces an uniformity $\mathcal{U}$ on $\mathcal{A}(X\rightarrow K)$, which has a base the collection of sets
$$S_r=\{(f,g):\; f,g\in \mathcal{A}(X\rightarrow K),\, \|f-g\|_{\infty}<r\}$$
for all $r\in\Gamma^{\#}$.

\vspace{1ex}

\noindent Following the ideas of \cite{Hector2}, the uniform space $(\mathcal{A}(X\rightarrow K),\mathcal{U})$ is (ultra)metrizable and the uniformity is induced by the ultrametric $d_{\infty}:\,\mathcal{A}(X\rightarrow K)\times \mathcal{A}(X\rightarrow K)\rightarrow \mathbb{R}^+$ defined as 
$$d_{\infty}(f,g):=\max_{x\in X}\{d(f(x),g(x))\}.$$
where $d: K \rightarrow  \mathbb{R}^+$ is the ultrametric defined on $K$ (see Preliminaries). 

\vspace{1ex}

\noindent We have that $d_{\infty}$ and $\|\, \|_{\infty}$ induce the same Cauchy sequences on $\mathcal{A}(X\rightarrow K)$, since 
$$\{z\in K:\; d(z,a)< \frac{1}{2^{n}}\}\subset \{z\in K:\; v(z-a)< \hat{g}_n^{-1}\}\subset\{z\in K:\; d(z,a)< \frac{1}{2^{n-1}}\}$$
for all $a\in K$ and $n\in\mathbb{N}\setminus\{0\}$.
Then $(\mathcal{A}(X\rightarrow K),\mathcal{U})$ is complete
if and only if each Cauchy sequence converges in $(\mathcal{A}(X\rightarrow K),\mathcal{U})$. 

\begin{thm}\label{analytic}
Let $a\in K$ and $r\in\Gamma$.  Let $p_n: B_a(r)\rightarrow K$ a Cauchy sequence of polynomial in $K[X]$ with respect to $\|\;\|_{\infty}$ in $B_a(r)$,
then $p_n$ converges uniformly to an analytic function $f:B_a(r)\rightarrow K$.
\end{thm}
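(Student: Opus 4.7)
After the substitution $z \mapsto z - a$ I may assume $a = 0$, and I write each polynomial as $p_n(z) = \sum_{j=0}^{d_n} a_{n,j} z^j$. Since each difference $p_n - p_m$ is itself a polynomial and hence a power series convergent on $B_0(r)$, Theorem \ref{modulo maximo} gives
$$\|p_n - p_m\|_{\infty} \;=\; \max_{j} v(a_{n,j} - a_{m,j})\, r^{j}.$$
Thus the Cauchy hypothesis translates into a coefficientwise estimate: for every $\epsilon \in \Gamma^{\#}$ there exists $N$ such that $v(a_{n,j} - a_{m,j})\, r^{j} < \epsilon$ for all $j$ and all $m, n \geq N$. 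In particular each sequence $(a_{n,j})_{n}$ is Cauchy in $K$ and converges to some $a_j \in K$; moreover, setting $m = N$, taking $j > \deg p_{N}$ (so that $a_{N,j} = 0$) and letting $n \to \infty$ produces the tail bound $v(a_{j}) \leq \epsilon\, r^{-j}$.

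\textbf{The main obstacle: $a_{j} \to 0$.} The bound just obtained would be useless in a rank-one setting, since $r^{-j}$ need not tend to $0$; here the infinite-rank structure of $\Gamma$ is essential. All the elements $r^{-j}$ lie in the smallest convex subgroup of $\Gamma$ containing $r$, and hence $\{r^{-j} : j \in \mathbb{N}\}$ is bounded above in $\Gamma$ by $\hat{g}_k$ for any $k$ past the support of $r$. Given an arbitrary $\mu \in \Gamma$, I choose $\epsilon \in \Gamma^{\#}$ with support lying strictly to the right of the supports of both $r$ and $\mu$; the antilexicographic order on $\Gamma$ then forces $\epsilon\, r^{-j} < \mu$ simultaneously for every $j$. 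Combined with the previous estimate, this yields $v(a_{j}) < \mu$ for all $j > \deg p_{N}$, which is exactly the statement $a_{j} \to 0$ in $K$.

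\textbf{Conclusion.} By Theorem \ref{convergencia series} the series $f(z) := \sum_{j=0}^{\infty} a_{j} z^{j}$ now converges on all of $K$ and so defines an analytic function whose restriction to $B_{0}(r)$ is the candidate limit. Since $p_n - f = \sum_j (a_{n,j} - a_j)\, z^{j}$ is again a convergent power series, a second application of the maximum principle yields
$$\|p_n - f\|_{\infty} \;=\; \max_{j} v(a_{n,j} - a_{j})\, r^{j}.$$
Letting $m \to \infty$ in the Cauchy estimate of the first paragraph bounds this quantity by $\epsilon$ for every $n \geq N$, which is the claimed uniform convergence. Everything except the paragraph on $a_{j} \to 0$ is a routine bookkeeping around Theorem \ref{modulo maximo}.
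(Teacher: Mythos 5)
Your proof is correct and follows essentially the same route as the paper's: the maximum principle converts the uniform Cauchy condition into coefficientwise estimates, the infinite‑rank structure of $\Gamma$ forces the tail coefficients to vanish, and Theorem \ref{convergencia series} then produces the analytic limit. You streamline the paper's argument by avoiding its case split on bounded versus unbounded degrees and its passage to a subsequence with increasing degrees, and you make explicit the convex‑subgroup reason why $\epsilon\, r^{-j}<\mu$ can be arranged uniformly in $j$ (where the paper is terse); just note that your chosen $\epsilon$ must in addition satisfy $\epsilon<1$ --- e.g.\ $\epsilon=\hat{g}_k^{-1}$ with $k$ beyond the supports of $r$ and $\mu$ --- so that the antilexicographic comparison goes the right way.
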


\begin{proof}
Let $r\in\Gamma$, without loss of generality we suppose that $a=0$. Let $\epsilon>0$ with $\epsilon<r^i$ for all $i\in\mathbb{N}$ and $(p_n(z))_n$ is a Cauchy sequence of polynomials (in $K[x]$) with respect to $\|\;\|_{\infty}$ on $B_0(r)$. We consider the following cases:

\vspace{1ex}

\noindent If $\{deg(p_n(z)):\, n\in\mathbb{N}\}$ is finite, then the elements of $(p_n(z))_n$ are of the form
$$p_n(z)=\sum_{i=0}^{m}a_i^{(n)}z^i\quad\quad\left(a_i^{(n)}\in K\right)$$
for some $m\in\mathbb{N}$. Since $(p_n)$ is a Cauchy sequence, there exists $N\in \mathbb{N}$ such that $\|p_t-p_u\|_\infty<\epsilon^2$ for all $t,u\geq N$
and for all $i\in\{0,..,m\}$. Applying the Maximum Principle to $(p_t-p_u)$, we observe that
$$v\left(a_i^{(t)}-a_i^{(u)}\right)r^i\leq \max_{1\leq i\leq m}v\left(a_i^{(t)}-a_i^{(u)}\right)r^i=\|p_t-p_u\|_\infty<\epsilon^2$$
Therefore, by the choose of $\epsilon$, $v(a_i^{(t)}-a_i^{(u)})<\epsilon$ for all $t,u\geq N$ and for all $i\in\{0,..,m\}$, which implies that
 the sequence $(a_i^{(n)})_{n}$ is Cauchy uniformly in $i$ and hence converges in $K$. Let
$a_i=\displaystyle\lim_{n\rightarrow\infty} a_i^{(n)}$, then using classical arguments it can be prove directly that $(p_n)_n$ converges uniformly to 
$p(z)=\displaystyle\sum_{i=0}^{m}a_iz^i$ on $B_0(r)$.

\vspace{1ex}

\noindent If $\{deg(p_n(z)):\,n\in\mathbb{N}\}$ is infinite, we consider a subsequence $(q_n(z))_n$ of $(p_n(z))_n$ with $deg(q_k)<deg(q_{k+1})$, and clearly $(q_n(z))_n$ is a Cauchy sequence.
We suppose that
$$q_n(z)=\sum_{i=0}^{M_n}a_{i}^{(n)}z^{i}$$
where $M_n=deg(q_n(z))$. Since $a_i^{(n)}=0$ if $i\geq M_n$, we write
 $$q_n(z)=\sum_{i=0}^{\infty}a_{i}^{(n)}z^{i}.$$

\noindent Using similar arguments of the previous case, there exists $N\in\mathbb{N}$ such that
 $$v(a_i^{(t)}-a_i^{(u)})<\epsilon\quad\quad (t,u\geq N)\qquad\qquad (\ast)$$
for all $i\in\mathbb{N}$. Therefore the sequence $(a_i^{(n)})_{n}$ is Cauchy uniformly in $i$ and hence converges in $K$.
Let $a_i=\displaystyle\lim_{n\rightarrow\infty}a_i^{(n)}$. We fix $u$ and then $t\rightarrow\infty$ in $(\ast)$, we have that $v(a_i-a_i^{(N)})<\epsilon$
 for all $i\in\mathbb{N}$. Moreover, $a_{i}^{(N)}=0$ if $i>M_N$, which implies
$$v(a_i) = v(a_i-a_i^{(N)})<\epsilon\quad\quad (i>M_N).$$
 Therefore $\displaystyle\lim_{i\rightarrow\infty}a_i=0$, and by theorem \ref{convergencia series} the function $p(z)=\displaystyle\sum_{i=0}^{\infty}a_iz^i$
 is well defined on $B_0(r)$. Using classical arguments it can be prove directly that $(q_n(z))_n$ converges uniformly to $p(z)$ on $B_0(r)$.
On the another hand,
 $$\|p_n-p\|_{\infty}\leq \max\{\|p_n-q_n\|_{\infty}\|q_n-p\|_{\infty}\}.$$
 Since $(q_n(z))_n$ is a subsequence of $(p_n(z))_n$ and that is a Cauchy sequence uniformly on $B_0(r)$, then $(p_n(z)-q_n(z))_n\rightarrow 0$ if $n\rightarrow \infty$ uniformly in $B_0(r)$. Therefore, we can conclude that the sequence of polynomial $p_n(z)$ converges uniformly to an analytic function $p(z)$.
\end{proof}

\begin{cor}\label{analytic complete}
Let $(f_n)_n$ a sequence of analytic functions defined on $B_a(r)$ for some $r\in \Gamma$. If $\displaystyle\lim_{n\rightarrow\infty}f_n=f$ uniformly on $B_a(r)$, then $f$
is an analytic function on $B_a(r)$.
\end{cor}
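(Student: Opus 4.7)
The plan is to reduce the corollary to Theorem \ref{analytic} by replacing each $f_n$ with a polynomial approximation. Since Theorem \ref{analytic} already guarantees that a $\|\cdot\|_\infty$-Cauchy sequence of polynomials on $B_a(r)$ converges uniformly to an analytic function, it suffices to build a sequence of polynomials $(p_n)_n$ that is $\|\cdot\|_\infty$-Cauchy on $B_a(r)$ and whose uniform limit is the same $f$. Translating if necessary, I may assume $a=0$.

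First, I would fix a sequence $(\epsilon_n)_n$ in $\Gamma^{\#}$ with $\epsilon_n\to 0$ (for instance $\epsilon_n=\hat g_n^{-1}$). Each $f_n$ is analytic on $B_0(r)$, so by the discussion following the definition of analyticity and by Theorem \ref{infinitos centros}, $f_n(z)=\sum_{i=0}^\infty a_i^{(n)}z^i$ with $a_i^{(n)}\in K$, and Theorem \ref{convergencia series} gives $a_i^{(n)}\to 0$ as $i\to\infty$ (for each fixed $n$). Consequently, for each $n$ I can choose $M_n\in\mathbb{N}$ large enough that $v(a_i^{(n)})r^i<\epsilon_n$ for all $i>M_n$. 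Define the truncation
\[
p_n(z)=\sum_{i=0}^{M_n}a_i^{(n)}z^i.
\]
Applying the Maximum Principle (Theorem \ref{modulo maximo}) to the analytic tail $f_n-p_n$ on $B_0(r)$ yields
\[
\|f_n-p_n\|_\infty=\max_{i>M_n}v(a_i^{(n)})r^i\le\epsilon_n,
\]
so $\|f_n-p_n\|_\infty\to 0$.

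Next I would verify that $(p_n)_n$ is Cauchy with respect to $\|\cdot\|_\infty$ on $B_0(r)$. By hypothesis $f_n\to f$ uniformly, hence $(f_n)_n$ is $\|\cdot\|_\infty$-Cauchy, and by the ultrametric triangle inequality
\[
\|p_m-p_n\|_\infty\le\max\bigl\{\|p_m-f_m\|_\infty,\,\|f_m-f_n\|_\infty,\,\|f_n-p_n\|_\infty\bigr\},
\]
which can be made arbitrarily small in $\Gamma^{\#}$. By Theorem \ref{analytic}, $(p_n)_n$ therefore converges uniformly on $B_0(r)$ to an analytic function $g:B_0(r)\to K$. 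Finally, another application of the ultrametric inequality,
\[
\|f-g\|_\infty\le\max\bigl\{\|f-f_n\|_\infty,\,\|f_n-p_n\|_\infty,\,\|p_n-g\|_\infty\bigr\},
\]
shows that all three terms on the right tend to $0$, so $f=g$ and $f$ is analytic on $B_0(r)$.

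The only mildly delicate step is the choice of $M_n$: I need the same sequence $(\epsilon_n)_n$ to control both the degree truncation error and to form a cofinal descending sequence in $\Gamma^{\#}$. Using the convex subgroup filtration and the basic sequence $\hat g_n^{-1}$, this is routine, but it is the point at which the infinite-rank nature of the valuation must be handled explicitly; everything else follows from the Maximum Principle and the ultrametric triangle inequality.
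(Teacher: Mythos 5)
Your proposal is correct and follows essentially the same route as the paper: approximate each $f_n$ by a polynomial $p_n$ with $\|f_n-p_n\|_\infty$ small, note that $(p_n)_n$ is then $\|\cdot\|_\infty$-Cauchy and converges uniformly to $f$, and invoke Theorem \ref{analytic}. In fact you are more careful than the paper, which fixes a single $\epsilon$ for all $n$; your explicit choice of a null sequence $\epsilon_n=\hat g_n^{-1}$ and truncation degrees $M_n$ (justified by Theorems \ref{convergencia series} and \ref{modulo maximo}) is exactly the detail needed to make $\|f_n-p_n\|_\infty\to 0$ and hence $p_n\to f$.
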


\begin{proof}
Let $\epsilon\in\Gamma$. There exists a sequence of polynomials $(p_n)_n$ such that
$\|f_n-p_n\|_{\infty}<\epsilon$ for all $n\in\mathbb{N}$. For all
$$\|f-p_n\|_{\infty}\leq \max\{\|f-f_n\|_{\infty},\|f_n-p_n\|_{\infty}\}.$$
Since  $(f_n)_n$ converges we have that $\displaystyle\lim_{n\rightarrow\infty}p_n=f$ uniformly on $B_a(r)$, and by the previous theorem $f$ is analytic on $B_a(r)$.
\end{proof}

\begin{cor}\label{analytic metric}
$(\mathcal{A}(B_a(r)\rightarrow K),\mathcal{U})$ is a complete uniform space.
\end{cor}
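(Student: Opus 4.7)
The plan is to reduce completeness of $(\mathcal{A}(B_a(r)\rightarrow K),d_1)$ to a direct application of Corollary \ref{analytic complete}, using that $K$ itself is complete (as the completion of $F_\infty$ by Cauchy nets described in Preliminaries). Concretely, I would start with an arbitrary $d_1$-Cauchy sequence $(f_n)_n$ in $\mathcal{A}(B_a(r)\rightarrow K)$ and unwind the definition: for every $s\in\mathbb{R}^+$ there exists $n_0$ such that $d(f_m(x),f_n(x))\le d_1(f_m,f_n)<s$ for all $x\in B_a(r)$ and all $m,n\ge n_0$. In particular, for each fixed $x$ the sequence $(f_n(x))_n$ is Cauchy in $(K,d)$; since $K$ is complete, this pointwise limit $f(x):=\lim_{n\to\infty}f_n(x)$ exists in $K$ and defines a function $f:B_a(r)\rightarrow K$.

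Next I would verify that the convergence $f_n\to f$ is actually uniform on $B_a(r)$, i.e.\ $d_1(f_n,f)\to 0$. Fixing $s$ and $n_0$ as above, one lets $m\to\infty$ in the inequality $d(f_m(x),f_n(x))<s$ for each $x$; by continuity of $d$ in its first argument this yields $d(f(x),f_n(x))\le s$ uniformly in $x\in B_a(r)$, hence $d_1(f,f_n)\le s$ for $n\ge n_0$. Thus $(f_n)_n$ converges uniformly on $B_a(r)$ to $f$. Finally, since each $f_n$ is analytic on $B_a(r)$ and the convergence is uniform, Corollary \ref{analytic complete} applies and gives that $f\in\mathcal{A}(B_a(r)\rightarrow K)$. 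Therefore the $d_1$-Cauchy sequence $(f_n)_n$ converges to an element of $\mathcal{A}(B_a(r)\rightarrow K)$, which proves completeness.

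There is essentially no serious obstacle here: the heavy lifting has already been done in Theorem \ref{analytic} and Corollary \ref{analytic complete}. The only mildly delicate point is passing from pointwise Cauchy to uniform convergence, which is where the ultrametric structure of $d_1$ (taking a maximum over $X$ rather than a supremum, as noted in the definition of $d_1$) makes the limit-passing transparent and avoids the usual subtleties one would face with a non-ultrametric sup-norm.
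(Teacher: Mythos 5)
Your argument is correct, but it reaches the conclusion by a different decomposition than the paper. You produce the limit function directly: a $d_1$-Cauchy sequence is uniformly Cauchy, hence pointwise Cauchy, and completeness of $(K,d)$ (which holds since $K$ is by construction the completion of $F_\infty$) yields a pointwise limit $f$; the standard limit-passing argument then upgrades this to uniform convergence, and Corollary~\ref{analytic complete} closes the argument by showing $f$ is analytic. The paper instead never constructs $f$ pointwise: it approximates each $f_n$ by a polynomial $p_n$ with $\|f_n-p_n\|_\infty<\epsilon$, checks via the strong triangle inequality that $(p_n)_n$ is itself Cauchy, and lets Theorem~\ref{analytic} manufacture the analytic limit $f$ of $(p_n)_n$ (there the existence of the limit comes from completeness of $K$ applied to the coefficient sequences $(a_i^{(n)})_n$), after which $\|f-f_n\|_\infty\le\max\{\|f-p_n\|_\infty,\|p_n-f_n\|_\infty\}$ gives $f_n\to f$. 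Your route is the more classical one and uses completeness of $K$ at the level of function values rather than coefficients; it also makes transparent where uniformity enters. The paper's route has the mild advantage of not needing to separately verify that the pointwise limit is attained uniformly, since Theorem~\ref{analytic} already delivers a uniform limit; one small point you glossed over, which the paper's surrounding discussion covers, is that uniform convergence in $d_1$ and in $\|\cdot\|_\infty$ coincide because the two induce the same uniformity $\mathcal{U}$, and Corollary~\ref{analytic complete} is stated for the latter.
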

\begin{proof}
Let $(f_n)_n$ be a Cauchy sequence in $(\mathcal{A}(B_a(r)\rightarrow K),\mathcal{U})$. 
By the previous assertions, we only must prove that $(f_n)_n$ converges in $(\mathcal{A}(B_a(r)\rightarrow K),\|\,\|_{\infty})$.

\vspace{1ex}

\noindent Let $\epsilon\in \Gamma^{\#}$, then there exists $m\in\mathbb{N}$ such that for all $k,n\geq m$, 
$$\|f_k-f_n\|_{\infty}<\epsilon.$$

\noindent On the another hand, for each $n\in \mathbb{N}$ there exists a polynomial $p_n\in K[x]$ 
such that $\|f_n-p_n\|_{\infty}<\epsilon$. We have that
$$\|p_n-p_k\|_{\infty}\leq \max\{\|f_n-p_n\|_{\infty}, \|f_n-f_k\|_{\infty},\|f_k-p_k\|_{\infty}\}<\epsilon,$$
for all $k,n\geq m$, that is, $(p_n)_n$ is also a Cauchy sequence in $(\mathcal{A}(B_a(r)\rightarrow K),d_{\infty})$. Then by theorem \ref{analytic}, $(p_n)$ converges uniformly to an analytic function $f$ on $B_a(r)$. But

$$\|f-f_n\|_{\infty}\leq\max\{\|f-p_n\|_{\infty},\|p_n-f_n\|_{\infty}\}$$
and by usual arguments, we conclude that $f_n$ converges uniformly to $f$ on $B_a(r)$.
\vspace{1ex}
\end{proof}

\begin{example}[A continuous function that is not uniformly approximable on $K$ by polynomials] We remark that $\hat{k}=\{x+B_0(1^-):\, x\in\mathbb{R}\}$. Let $f:B_0(\hat{g}_1)\rightarrow K$ defined as follows
\[ f(z)=\left\{
\begin{array}{ll}
 n & \text{if $z\in n+B_0(1^-)$}\\
 0 & \text{otherwise}
\end{array}
\right. 
\]
where $n\in\mathbb{N}$. Since each ball $B_x(1^-)$ and $B_0(\hat{g}_1)$ are clopen, $f$ is continuous on $B_0(\hat{g}_1)$. 

\vspace{1ex}

\noindent Let $\epsilon<\frac{1}{X_1}$. If there exists $p(z)\in K[x]$ such that $v(f(z)-p(z))<\epsilon$ for all $z\in B_0(\hat{g}_1)$ then $f(z)=p(z)+w(z)$ with $v(w(z))<\epsilon$. Applying theorem \ref{modulo maximo} to $p(z)$, $v(p(z))$ takes its maximum on $\{z\in K:\; v(z)=\hat{g}_1\}$, but on this set
\begin{align*}
v(p(z))& =v(f(z)+(p(z)-f(z)))\\
&\leq\max\{v(f(z)),v(p(z)-f(z))\}\\
&\leq\max\{0,v(w(z))\}<\epsilon<1.
\end{align*}
However, $v(p(n))=v(f(n)-w(n))=v(n-w(n))=1$ for all $n\in\mathbb{N}$ since $v(w(n))<\epsilon<1$.
\end{example}

\noindent Using the corollary \ref{analytic metric} and usual arguments, we can prove directly the following corollary.

\begin{cor}\label{completo2}
If $C$ is closed set in $K$, then $(\mathcal{A}(X\rightarrow C),d_{\infty})$ is a complete metric space.
\end{cor}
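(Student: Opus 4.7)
The plan is to deduce this corollary directly from Corollary \ref{analytic metric} by interpreting $\mathcal{A}(X\rightarrow C)$ as a closed subset of a known complete space. The structural idea is: completeness of the ambient space already gives a candidate limit, and the only new ingredient is that the closedness of $C$ forces this candidate limit to still take values in $C$.

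Concretely, I would start from a Cauchy sequence $(f_n)_n$ in $(\mathcal{A}(X\rightarrow C),d_1)$ and observe that, since $C\subseteq K$, each $f_n$ is also an element of $\mathcal{A}(X\rightarrow K)$, and the metric $d_1$ is computed by the same pointwise formula. Hence $(f_n)_n$ is a Cauchy sequence in $(\mathcal{A}(X\rightarrow K),d_1)$, which by Corollary \ref{analytic metric} (applied on a ball $B_a(r)$ containing or equal to $X$, in line with the usage of that corollary) produces an analytic function $f:X\rightarrow K$ with $d_1(f_n,f)\rightarrow 0$, that is, $f_n\rightarrow f$ uniformly on $X$.

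The remaining task is to verify that $f(X)\subseteq C$. Uniform convergence implies pointwise convergence, so for each fixed $x\in X$ we have $f_n(x)\rightarrow f(x)$ in $K$. Since $f_n(x)\in C$ for every $n$ and $C$ is closed in $K$, the limit $f(x)$ also belongs to $C$. Therefore $f\in \mathcal{A}(X\rightarrow C)$, and the sequence $(f_n)_n$ converges to $f$ in $(\mathcal{A}(X\rightarrow C),d_1)$.

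The only step requiring any care is the last one, namely the passage from the values of the $f_n$ to the value of the limit $f$ at each point: it relies on the topology on $K$ behaving well with respect to the ultrametric $d$ in the sense that closed sets contain limits of their sequences. Since this is just the definition of closedness in a metric space, there is no real obstacle; the corollary is essentially the observation that $\mathcal{A}(X\rightarrow C)$ sits as a closed subspace of the already-complete space $\mathcal{A}(X\rightarrow K)$, hence inherits completeness.
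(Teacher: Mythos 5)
Your proposal is correct and is exactly the ``usual argument'' the paper alludes to: the paper gives no written proof, only the remark that the corollary follows from Corollary \ref{analytic metric}, and your route (view the Cauchy sequence inside the complete space $\mathcal{A}(X\rightarrow K)$, extract the uniform limit $f$, then use pointwise convergence together with closedness of $C$ to conclude $f(X)\subseteq C$) is the intended one. Your parenthetical care about Corollary \ref{analytic metric} being stated only for balls $B_a(r)$ is also appropriate, since that is the only setting in which the corollary is actually used later.
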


\noindent We recall that the proof of the following result is an extention of the proof of local invertibility theorem for $C^1$ functions and the implicit theorem proved in \cite{Hector1} and \cite{Hector3} respectively. The principal argument is the application of fixed point Banach theorem on a complete metric space.

\vspace{1ex}

\begin{thm}[{\bf Invertibility theorem for analytic functions}]
Let $f:B_{x_0}(r)\rightarrow K$ a non constant analytic function such that $f'(x_0)\neq 0$. Then, there exist a neighbourhood $U$ of $x_0$ such that f is injective on $U\cap B_{x_0}(r)$ and $g:f(U\cap B_{x_0}(r))\rightarrow U\cap B_{x_0}(r)$, the local inverse of $f$, is analytic on $f(U\cap B_{x_0}(r))$.
\end{thm}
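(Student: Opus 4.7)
My strategy is to recast $f(z)=y$ as a fixed point problem and invoke the Banach contraction principle on the complete ultrametric space $(\mathcal{A}(B_{f(x_0)}(\rho)\rightarrow B_{x_0}(s)),d_1)$ supplied by Corollary \ref{completo2}. Using Theorem \ref{infinitos centros} I first expand $f$ about $x_0$,
$$f(z)=f(x_0)+\alpha(z-x_0)+R(z),\qquad \alpha:=f'(x_0)\neq 0,\qquad R(z):=\sum_{n\geq 2}a_n(z-x_0)^n,$$
so that $f(z)=y$ is equivalent to the fixed point equation $z=x_0+\alpha^{-1}(y-f(x_0))-\alpha^{-1}R(z)$, to be solved for $z$ as a function of $y$.

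For radii $s\leq r$ and $\rho$ in $\Gamma$ to be chosen, I define
$$(Tg)(y):=x_0+\alpha^{-1}(y-f(x_0))-\alpha^{-1}R(g(y)),\qquad g\in\mathcal{A}(B_{f(x_0)}(\rho)\rightarrow B_{x_0}(s)).$$
Since $a_n\rightarrow 0$ by Theorem \ref{convergencia series}, the quantity $M(s):=\max_{n\geq 2}v(a_n)s^{n-1}$ exists in $\Gamma^{\#}$ and tends to $0$ as $s$ shrinks, so I pick $s$ small enough that $v(\alpha^{-1})M(s)<1$ and then take $\rho:=v(\alpha)s$. A direct ultrametric estimate gives $v((Tg)(y)-x_0)\leq s$, so $T$ preserves the ball; and factoring $A^n-B^n=(A-B)\sum_{k=0}^{n-1}A^kB^{n-1-k}$ with $A=g(y)-x_0$, $B=h(y)-x_0$ yields
$$\|Tg-Th\|_{\infty}\leq v(\alpha^{-1})M(s)\,\|g-h\|_{\infty},$$
a strict contraction in $\|\cdot\|_{\infty}$, and equivalently in $d_1$.

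The step I expect to require the most care is verifying that $T$ sends analytic functions to analytic functions, since composition of analytic functions is not automatic in this infinite-rank Krull-valued setting. I handle it by truncation: the partial sums $T_Ng(y):=x_0+\alpha^{-1}(y-f(x_0))-\alpha^{-1}\sum_{n=2}^{N}a_n(g(y)-x_0)^n$ are polynomials in the analytic function $g$ and so are themselves analytic, and $v(a_n(g(y)-x_0)^n)\leq v(a_n)s^n\rightarrow 0$ uniformly in $y\in B_{f(x_0)}(\rho)$, so $T_Ng\rightarrow Tg$ uniformly; Corollary \ref{analytic complete} then gives $Tg\in\mathcal{A}(B_{f(x_0)}(\rho)\rightarrow B_{x_0}(s))$.

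Completeness (Corollary \ref{completo2}) and Banach's fixed point theorem then produce a unique analytic $g$ with $f(g(y))=y$ on $B_{f(x_0)}(\rho)$. Setting $U:=B_{x_0}(s)$, injectivity of $f$ on $U\cap B_{x_0}(r)=U$ follows from the same estimate: if $f(z_1)=f(z_2)$ with $z_1,z_2\in U$, then $\alpha(z_1-z_2)=R(z_2)-R(z_1)$ forces $v(\alpha)v(z_1-z_2)\leq M(s)v(z_1-z_2)$, which is impossible for $z_1\neq z_2$ because $M(s)<v(\alpha)$. Combined with $f\circ g=\mathrm{id}$, this shows $g$ is the analytic local inverse of $f$ on $f(U\cap B_{x_0}(r))=B_{f(x_0)}(\rho)$.
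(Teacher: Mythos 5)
Your construction is, at bottom, the paper's own: writing $f(z)=f(x_0)+\alpha(z-x_0)+R(z)$ and setting $(Tg)(y)=x_0+\alpha^{-1}(y-f(x_0))-\alpha^{-1}R(g(y))$ gives exactly the paper's operator $(h(\psi))(y)=\psi(y)-s^{-1}F(\psi(y),y)$ with $F(x,y)=f(x)-y$ and $s=f'(x_0)$, applied on the same complete space supplied by Corollary \ref{completo2} and resolved by the same appeal to Banach. Where you differ is only in how the estimates are produced (from the coefficient quantity $M(s)=\max_{n\ge 2}v(a_n)s^{n-1}$ rather than from the continuity of the difference quotient $\overline{G}$), and you add two things the paper leaves implicit: a verification that $T$ actually maps $\mathcal{A}$ into $\mathcal{A}$ (via truncation and Corollary \ref{analytic complete} --- the paper never checks that $f\circ\psi$ is analytic), and an explicit injectivity argument for $f$ on $U$. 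Both additions are correct and worthwhile.

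The genuine gap is the sentence ``a strict contraction in $\|\cdot\|_{\infty}$, and equivalently in $d_1$.'' It is not equivalent, and this is precisely the point on which the whole argument turns in infinite rank. Completeness (Corollary \ref{completo2}) and the Banach theorem are stated for the \emph{real-valued} ultrametric $d_1$, so you need a real contraction constant $<1$. From $\|Tg-Th\|_{\infty}\le c\,\|g-h\|_{\infty}$ with $c=v(\alpha^{-1})M(s)<1$ in $\Gamma$ you cannot conclude this: every $c\in\Gamma$ has finite support, say contained in $\{1,\dots,N\}$, hence $c^n>\hat{g}_{N+1}^{-1}$ for all $n$, so $c^n\not\rightarrow 0$ and the Picard iterates $g_{n+1}=Tg_n$ need not be Cauchy for the uniformity $\mathcal{U}$ (equivalently for $d_1$) at all. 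This is why the paper does not stop at ``some $c<1$'' but forces the factor below $\hat{g}_1^{-1}v(s)$ and then converts, through the function $\phi$ defining $d$, into the bound $d_1(h(\psi),h(\varphi))\le\tfrac12 d_1(\psi,\varphi)$. You must insert an analogous conversion, and it deserves real care: $\phi$ is not submultiplicative (e.g.\ $\phi(X_1^{-1}\cdot X_1^{-2})=2^{-2}$ while $\phi(X_1^{-1})\phi(X_1^{-2})=2^{-3}$), so the passage from a valuation-theoretic gain of $\hat{g}_1^{-1}$ to a metric gain of $\tfrac12$ cannot be justified by the product inequality the paper writes down; it has to be argued directly from the inclusions between $d$-balls and $v$-balls, and one should check that the gain one can actually certify at the level of $d_1$ is uniform in $n$. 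Without some such step the fixed point, and hence the analytic inverse, is not obtained.
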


\begin{proof}

Let us suppose that $f$ is analytic on $B_{x_0}(r)$ and let $s=f'(x_0)\neq 0$. We remark that $f$ is also a $C^1$ function on $X$, then there is $r_1\in\Gamma$ such that
$$v\left(\frac{f(x)-f(y)}{x-y}-f'(x_0)\right)<\hat{g}_1^{-1}v(f'(x_0)).$$
for each $x,y\in B_{x_0}(r_1)$ with $x\neq y$. Using the strong triangle inequality we can conclude that
$$v(f(x)-f(y))=v(s)v(x-y)\,\,\; (x,y\in  B_{x_0}(r_1)).$$

\noindent On the another hand, the function  $g(x,y)=f(x)-y$ is continuos in $(x_0,y_0)$ and $g(x_0,y_0)=0$, then there exists $\delta_1\in\Gamma$ such that $v(f(x_0)-y)\leq v(s)\cdot r_1$ for each $y\in B_{x_0}(\delta_1)$. 

\vspace{1ex}

\noindent We choose $\delta=\min\{r_1,\delta_1\}$ and  consider the set
$\mathcal{A}(B_{y_0}(\delta)\rightarrow B_{x_0}(r_1))$ with the metric $d_{\infty}$. The set $B_{x_0}(r_1)$ is closed on $K$, then by Corollary \ref{completo2}, $(\mathcal{A}(B_{y_0}(\delta)\rightarrow B_{x_0}(r_1)),d_{\infty})$ is a complete metric space.

\vspace{1ex}

\noindent For $\psi\in \mathcal{A}(B_{y_0}(\delta)\rightarrow B_{x_0}(r_1))$, we define the function $$h:\mathcal{A}(B_{y_0}(\delta)\rightarrow B_{x_0}(r_1))\rightarrow \mathcal{A}(B_{y_0}(\delta)\rightarrow B_{x_0}(r_1))$$ as follows
$$(h(\psi))(y):=\psi(y)-s^{-1}(f(\psi(y))-y).$$
 
\noindent We claim that $h$ is well defined. Indeed, let $y\in B_{y_0}(\delta)$ and $\psi\in (\mathcal{A}(B_{y_0}(\delta)\rightarrow B_{x_0}(r_1)),d_{\infty})$. Then $\psi(y)\in B_{x_0}(r_1)$ and
\begin{align*}
v((h(\psi))(y)-x_0)&=v(\psi(y)-s^{-1}(f(\psi(y))-y)-x_0)\\
&\leq\max\{v(\psi(y)-x_0),s^{-1}(f(\psi(y))-f(x_0)),s^{-1}(f(x_0)-y)\}.
\end{align*}
Since $\psi(y)\in B_{x_0}(r_1)$ and $r_1\leq r$, from the first observation 
$$v(f(\psi(y))-f(x_0))=v(s)v(\psi(y)-x_0).$$

\noindent On the other hand, by the choose of $\delta$ and the continuity of $g(x,y)$ in $(x_0,y_0)$, it follows that $v(f(x_0)-y)) \leq r_1$ if $v(y-y_0)\leq \delta$. Hence, $v((h(f))(x)-y_0)\leq r_1$ and $(h(\psi))(y)\in B_{x_0}(r_1)$. By the Proposition \ref{composicion de analiticas} we have that $f\circ \psi$ is analytic on $B_{y_0}(\delta)$, we conclude that $h(\psi)\in \mathcal{A}(B_{y_0}(\delta)\rightarrow B_{x_0}(r_1))$ and it is well defined.

\vspace{1ex}

\noindent For finishing the proof, we show that $h$ is a contraction on $\mathcal{A}(B_{y_0}(\delta)\rightarrow B_{x_0}(r_1))$.

\vspace{1ex}

\noindent Let $\psi,\varphi \in \mathcal{A}(B_{y_0}(\delta)\rightarrow B_{x_0}(r_1))$, then for all $y\in B_{y_0}(\delta)$ with $\psi(y)\neq \varphi(y)$
\begin{align*}
v((h(\psi))(y)-(h(\varphi))(y)) &=v((\psi(y)-\varphi(y))-s^{-1}(f(\psi(y))-f(\varphi(y)))\\
&=v(s^{-1}(\psi(y)-\varphi(y))\; v\left(s-\frac{f(\psi(y))-f(\varphi(y))}{\psi(y)-\varphi(y)}\right).
\end{align*}

\noindent But,  we have that $\psi(y),\varphi(y)\in B_{x_0}(r_1)$ for all $y\in B_{y_0}(\delta)$, and
$$\sup_{\Gamma^{\#}}\left\{v\left(\frac{f(x)-f(y)}{x-y}-f'(x_0)\right):\,x,y\in B_{x_0}(r_1),\, x\neq y\right\}<\hat{g}_1^{-1}v(f'(x_0)).$$
which implies that
$$v\left(s-\frac{f(\psi(y))-f(\varphi(y))}{\psi(y)-\varphi(y)}\right)\leq \hat{g}_1^{-1}v(s).$$
Therefore $v((h(\psi))(y)-(h(\varphi))(y))< \hat{g}_1^{-1} v(\psi(y)-\varphi(y)).$ Since the order and the valuation are compatible on $K$, we have that $$|(h(\psi))(y)-(h(\varphi))(y)|<|X_1^{-1}(\psi(y)-\varphi(y))|.$$ 
But $\phi$ is an increasing function and satisfies the following inequality (see \cite{Hector1} for more details):
$$\phi(|(h(\psi))(y)-(h(\varphi))(y)|)\leq \phi(|X_1^{-1}(\psi(y)-\varphi(y))|)\leq \phi(|X_1^{-1}|)\phi(|\psi(y)-\varphi(y)|).$$
 Therefore, by the definition of $d$, we have that
$$d((h(\psi))(x),(h(\varphi))(x))\leq\frac{1}{2}d(\psi(x),\varphi(x)))\qquad (x\in B_{x_0}(\delta), \psi(x)\neq \varphi(x)).$$
If $\psi(x)=\varphi(x)$ for some $x\in B_{x_0}(r)$, then $(\psi(f))(x)=(\varphi(g))(x)$ and the inequality above is true on $B_{x_0}(r)$. Hence, for all $x\in B_{x_0}(r)$
$$d((h(\psi))(x),(h(\varphi))(x))\leq\frac{1}{2}d(\psi(x),\varphi(x)))\leq \frac{1}{2}d_{\infty}(\psi,\varphi),$$
which implies that $h$ is a contraction, and applying the Banach fixed point theorem  there exists a unique function $$w(z)\in \mathcal{A}(B_{y_0}(\delta)\rightarrow B_{x_0}(r_1))$$
 such that $(h(w)(y)=w(y)$. But $(h(w))(y)=w(y)-s^{-1}((f\circ\, w)(y)-y)$, then $(f\circ\, w)(y)=y$ for all $y\in B_{y_0}(\delta)$. 

\end{proof}

\begin{cor}
Let $f:\,D\rightarrow K$ a non-constant analytic function such that $f'(z)\neq 0$ on $D$. If $U\subset D$ is an open set then $f(U)$ is open. 
\end{cor}

\end{document}